%
%
%
%
\documentclass{amsart}

\newtheorem{theorem}{Theorem}[section]

\theoremstyle{definition}

\newtheorem{example}[theorem]{Example}

\theoremstyle{remark}
\newtheorem{remark}[theorem]{Remark}

\numberwithin{equation}{section}



\begin{document}

\title{Real affine varieties of nonnegative curvature}


\author{Xiaoyang Chen}
\address{Department of Mathematics, University of Macau, Macau, China}
\email{xychen100@gmail.com}
\thanks{ The author was partially supported by FDCT/016/2013/A1.}

\subjclass[2010]{Primary 53C20, 53C24; Secondary 53C21, 53C25}


\keywords{real affine variety, nonnegative curvature, soul theorem}

\begin{abstract}
Let $X_\mathbb{C}$ be a smooth real affine variety with compact real points $X_\mathbb{R}$. We show that $X_\mathbb{C}$ is diffeomorphic to the normal bundle of $X_\mathbb{R}$
  provided that $X_\mathbb{C}$ admits a complete Riemannian metric  of nonnegative sectional curvature which is also invariant under the conjugation.
\end{abstract}

\maketitle

\section{Introduction}

In \cite{Tot03}, Totaro asked whether a compact manifold $M$ with nonnegative sectional curvature has a good complexification, i.e. there exists a smooth real affine variety
$X_\mathbb{C}$ such that $M$ is diffeomorphic to $X_\mathbb{R}$ ( the real points of $X_\mathbb{C}$ ) and the inclusion $ X_\mathbb{R} \to X_\mathbb{C}$ is a homotopy equivalence.
 A positive anwser to Totaro's question would in particular resolves an odd problem of Hopf
 saying that a compact nonnegatively curved manifold  has nonnegative Euler number \cite{Kul78}.
Recently I. Biswas and M. Mj has obtained a positive answer to Totaro's question for three dimensional manifolds \cite{BM}.

According to a remarkable theorem of Nash and Tognoli \cite{Nash} \cite{Tog}, any compact differentiable manifold can be realized as the real points $X_\mathbb{R}$  of a smooth real affine variety
$X_\mathbb{C}$. Now Totaro's question can be reformatted as the following:

\vskip0.2in

Let  $X_\mathbb{C}$ be a smooth real affine variety with compact real points $X_\mathbb{R}$. If $X_\mathbb{R} $ admits a complete Riemannian metric  of nonnegative sectional curvature, does there exist
a smooth real affine variety $U_\mathbb{C}$ such that $X_\mathbb{R}$ is diffeomorphic to $U_\mathbb{R}$ and the inclusion $U_\mathbb{R} \to U_\mathbb{C}$ is a homotopy equivalence?

\vskip0.2in

 In this paper we give a positive answer to the above question under a stronger assumption.
 More precisely, we are going to prove the following

\begin{theorem} \label{main}
Let $X_\mathbb{C}$ be a smooth real affine variety with compact real points $X_\mathbb{R}$. If $X_\mathbb{C}$ admits a complete Riemannian metric $g$ of nonnegative sectional curvature such that
$\tau^*g=g$, where $\tau: X_\mathbb{C} \to X_\mathbb{C}$ is the conjugation,
 then $X_\mathbb{C}$ is diffeomorphic to the normal bundle of $X_\mathbb{R}$.
 In particular, the inclusion  $ X_\mathbb{R} \to X_\mathbb{C}$ is a homotopy equivalence.

\end{theorem}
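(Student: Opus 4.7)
The plan is to apply the Cheeger--Gromoll soul theorem to $(X_\mathbb{C},g)$ and use the $\tau$-invariance of the metric to identify $X_\mathbb{R}$ with a soul. First, I recall that $\tau$ is an involutive isometry, so its fixed-point set $X_\mathbb{R}$ is a closed totally geodesic submanifold of $X_\mathbb{C}$. In local holomorphic coordinates $\tau(z)=\bar z$, and hence $d\tau$ acts as $+\mathrm{id}$ on $TX_\mathbb{R}$ and as $-\mathrm{id}$ on the normal bundle $\nu(X_\mathbb{R})$; in particular $\dim_\mathbb{R} X_\mathbb{R}=\mathrm{rank}\,\nu(X_\mathbb{R})=\tfrac12\dim_\mathbb{R} X_\mathbb{C}$. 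To produce a $\tau$-invariant soul, I run the Cheeger--Gromoll construction starting from a basepoint $p\in X_\mathbb{R}$: the initial totally convex set $C_p=\bigcap_\gamma\{b_\gamma\le 0\}$, intersected over unit-speed rays $\gamma$ emanating from $p$ with Busemann function $b_\gamma$, is $\tau$-invariant because $\tau$ permutes the rays and $b_{\tau\gamma}=b_\gamma\circ\tau$. Iteratively passing to the maximum set of the distance-to-boundary function preserves $\tau$-invariance, yielding a $\tau$-invariant soul $S$.

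Next I show $X_\mathbb{R}\subseteq S$. Pick $s\in S\cap X_\mathbb{R}$, which is nonempty since $p$ lies in both, and suppose $v\in\nu_s(S)$ is $\tau$-fixed, i.e., $v\in T_sX_\mathbb{R}\cap\nu_s(S)$. Then $\gamma(t)=\exp_s(tv)$ is a $\tau$-fixed geodesic, hence lies entirely in $X_\mathbb{R}$. On the other hand, by the soul theorem the normal exponential $\nu(S)\to X_\mathbb{C}$ is a diffeomorphism, so $\gamma$ is an unbounded ray; this contradicts the compactness of $X_\mathbb{R}$ unless $v=0$. Thus $T_sX_\mathbb{R}\cap\nu_s(S)=0$, forcing $T_sX_\mathbb{R}\subseteq T_sS$. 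Since $X_\mathbb{R}$ is connected and totally geodesic, this pointwise inclusion propagates to the global inclusion $X_\mathbb{R}\subseteq S$.

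The reverse inclusion $S\subseteq X_\mathbb{R}$ is the main obstacle. At $s\in X_\mathbb{R}\subseteq S$, write $T_sS=T_sX_\mathbb{R}\oplus W_s$ with $W_s=T_sS\cap\nu_s(X_\mathbb{R})$, and the task is to prove $W_s=0$. The symmetric version of the previous argument fails because a $-1$-eigenvector $w\in W_s$ exponentiates to a geodesic that remains in the compact soul $S$, with $\tau$ merely reversing its orientation. I expect to resolve this by invoking Perelman's theorem that the Sharafutdinov retraction $\pi:X_\mathbb{C}\to S$ is a smooth Riemannian submersion, and combining its $\tau$-equivariance with the dimension identity $\dim X_\mathbb{R}=\tfrac12\dim X_\mathbb{C}$: the horizontal distribution of $\pi$ along $X_\mathbb{R}$ should be forced to sit inside $\nu(X_\mathbb{R})$, which by a dimension count pins $T_sS$ to $T_sX_\mathbb{R}$. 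Once $S=X_\mathbb{R}$ is established, the soul theorem yields $X_\mathbb{C}\cong\nu(X_\mathbb{R})$, with the original inclusion $X_\mathbb{R}\hookrightarrow X_\mathbb{C}$ identified with the zero section and hence a homotopy equivalence.
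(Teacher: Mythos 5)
Your construction of a $\tau$-invariant soul and your argument that $X_\mathbb{R}\subseteq S$ are sound, and the latter is in fact a slicker route than the paper's. The paper instead proves a general lemma (any isometry $f$ with compact fixed-point set $E^f$ has $E^f$ inside some soul) by showing the Sharafutdinov retraction $P:E\to S$ and the homotopy from Yim's construction both commute with $f$, so that $E^f\cap S$ is a deformation retract of the compact boundaryless manifold $E^f$, which forces $E^f\cap S=E^f$. Your version via the normal exponential of the soul is more elementary, though you should justify $S\cap X_\mathbb{R}\neq\emptyset$ more carefully: the initial basepoint $p\in X_\mathbb{R}$ lies on the \emph{boundary} of the first totally convex set $C_0=\bigcap_\gamma\{b_\gamma\le 0\}$ (since $b_\gamma(p)=0$ and $b_\gamma(\gamma(t))>0$), so $p$ is generically discarded at the first contraction step and need not lie in $S$. (The paper gets nonemptiness for free from $P(E^f)\subseteq E^f\cap S$.)

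The genuine gap is exactly where you flag it: the reverse inclusion $S\subseteq X_\mathbb{R}$. The Perelman-submersion idea does not close it. Along $X_\mathbb{R}$ you can split everything into $\pm 1$ eigenspaces of $d\tau$: your first step gives $(T_sS)^+=T_sX_\mathbb{R}$ and $(\nu_s S)^+=0$, so $\nu_s S=(\nu_s S)^-\subseteq \nu_s X_\mathbb{R}$. The dimension count $\dim X_\mathbb{R}=\tfrac12\dim X_\mathbb{C}$ then reduces to the identity $\dim(T_sS)^-+\dim(\nu_sS)^-=\dim\nu_sX_\mathbb{R}$, which is automatically satisfied and tells you nothing about whether $(T_sS)^-=0$. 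Purely Riemannian reasoning cannot rule out a soul strictly larger than $X_\mathbb{R}$; one must use that $X_\mathbb{C}$ is a smooth affine variety. The paper invokes the Andreotti--Frankel/Milnor theorem (Theorem 7.2 in Milnor's \emph{Morse Theory}): a smooth affine variety of complex dimension $n$ has the homotopy type of a CW complex of dimension $\le n=\dim_\mathbb{R}X_\mathbb{R}$. Since $S$ is a compact manifold and a deformation retract of $X_\mathbb{C}$, its top $\mathbb{Z}/2$-homology is nonzero, forcing $\dim S\le\dim X_\mathbb{R}$; together with $X_\mathbb{R}\subseteq S$ this pins $\dim S=\dim X_\mathbb{R}$, and then $X_\mathbb{R}$ is open and closed in the connected $S$, so $X_\mathbb{R}=S$. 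This algebraic-geometric input is the missing ingredient in your proposal.
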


Note that in Theorem \ref{main},  $X_\mathbb{R}$ is the fixed point set of the isometry $\tau$. It follows that $X_\mathbb{R}$ is a totally geodesic submanifold of $X_\mathbb{C}$,
in particular,  the induced metric of $g$ has nonnegative sectional curvature on $X_\mathbb{R}$.

\begin{remark}
The assumption that $X_\mathbb{R}$ is compact in Theorem \ref{main} can not be dropped.
An example is given by $X_\mathbb{C}=\{{(z_1,z_2) \in \mathbb{C}^2|z_1^2-z_2^2=1}\}$.
Define a map $F$ by
$$
F:  X_{\mathbb{C}} \rightarrow \mathbf{S}^1 \times \mathbb{R}, \ (z_1, z_2) \mapsto (\frac{z_1+z_2}{|z_1 + z_2|}, ln |z_1-z_2|).
$$
Then $F$ is a diffeomorphism.
By pulling back the product metric on $\mathbf{S}^1 \times \mathbb{R}$, we see that $X_\mathbb{C}$ admits a complete flat Riemannian metric which is also invariant under the conjugation. However, the inclusion $X_{\mathbb{R}} \rightarrow X_{\mathbb{C}}$ is $not$ a homotopy equivalence since $X_{\mathbb{C}}$ is connected but $X_\mathbb{R}$ has two noncompact  connected components.

\end{remark}

\begin{example}

Let $G$ be a closed subgroup of $U(n)$ and $G_\mathbb{C}$ be the complexification of $G$. 
It's well known that $G_\mathbb{C}$ is a real affine variety \cite{Kul78}. Note that there is  a diffeomorphism $F: G \times Lie(G) \to G_\mathbb{C}$ given by

$$F: G \times Lie (G) \to G_\mathbb{C}, \; (x,A) \mapsto   e^{iA} x,$$
where $Lie(G)$ is the Lie algebra of $G$.

Let $\tau: G_\mathbb{C} \to G_\mathbb{C}$ be the conjugation and $\phi: G\times Lie(G) \to G \times Lie(G), (x, A) \mapsto (\bar{x}, -\bar{A})$ be an involution.
Then it is direct to check that $\tau F= F \phi$. Let $h$ be a bi-invariant metric on $G$ which is also invariant under the conjugation.
By pulling back the product  of $h$ and the flat metric on $ Lie(G)$, we get 
 a complete Riemannian metric $g$ on $G_\mathbb{C}$ with nonnegative sectional curvature such that $\tau^*g=g$. However, in general $g$ is $not$ $G_\mathbb{C}$ invariant as
$F$ is $not$ a group homomorphism in general.

\end{example}

\noindent \textbf {Acknowledgement}:  The  author would like to express his gratitude to Professors Huai-Dong Cao and Karsten Grove for support.

\section{Proof of Theorem \ref{main}}

The proof of Theorem \ref{main} is based on the soul construction of Cheeger and Gromoll in \cite{CG72}. In fact, we are going to show that $X_\mathbb{R}$ is a soul of $X_\mathbb{C}$.

We first prove the following theorem which is interesting on its own.

\begin{theorem} \label{fix}
Let $(E,g)$ be a complete noncompact connected manifold with nonnegative sectional curvature and $f$ be an isometry of $(E,g)$ with $E^f$  compact, where $E^f$ is the fixed point set of $f$.
Then $E^f$ is contained in a soul of $E$.
\end{theorem}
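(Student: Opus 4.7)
The plan is to perform the Cheeger-Gromoll soul construction in an $f$-equivariant fashion and to verify that the resulting soul contains $E^f$. If $E^f = \emptyset$ the statement is vacuous, so I fix $p \in E^f$.

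For each ray $\gamma$ from $p$, the Busemann function $b_\gamma(x) := \lim_{t \to \infty}(d(x,\gamma(t))-t)$ is convex on $E$, and because $f$ is an isometry with $f(p) = p$, the assignment $\gamma \mapsto f \circ \gamma$ is a bijection of rays from $p$ satisfying $b_{f\circ\gamma}(x) = b_\gamma(f^{-1}x)$. The compact totally convex subsets of \cite{CG72} built from these Busemann functions are therefore $f$-invariant, and since $E^f$ is compact one such subset, call it $C_0$, contains $E^f$. Iterating the Cheeger-Gromoll shrinking $C_{i+1} := \{y \in C_i : d(y,\partial C_i) = \max_{C_i} d(\cdot,\partial C_i)\}$ preserves compactness, total convexity, and $f$-invariance (the last because $\partial C_i$ is $f$-invariant and $f$ is an isometry), and the dimension strictly decreases at each step, so in finitely many stages I obtain an $f$-invariant totally geodesic soul $S$ of $E$.

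The heart of the argument is the inductive claim $E^f \subseteq C_i$ for every $i$. The base case was arranged above. For the inductive step, assume $E^f \subseteq C_i$, write $\phi(y) := d(y,\partial C_i)$ and $m := \max_{C_i}\phi$, and fix $x \in E^f$; I must show $\phi(x) = m$. Suppose $\phi(x) < m$, and pick a nearest point $q \in \partial C_i$ to $x$: the whole $\langle f\rangle$-orbit $\{f^k(q)\}$ lies in $\partial C_i$ at distance $\phi(x)$ from $x$. The plan is to apply the first-variation formula to each of the minimizing geodesics $x \to f^k(q)$ and to $f_*$-equivariantly average the resulting directional-derivative data (for instance via a Kakutani-type fixed-point argument on the closed convex cone of strict ascent directions for $\phi$) to extract a nonzero $f_*$-invariant $v \in T_xE^f$ along which $\phi$ strictly increases. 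Because $E^f$ is totally geodesic, $\exp_x(tv) \in E^f$ for small $t$, and $\phi$ strictly increases along this arc; iterating this ascent within the compact set $E^f$, together with the same symmetry argument applied at each stage, should force $\phi \equiv m$ throughout $E^f$, completing the induction.

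The main obstacle is precisely this last point: upgrading the naive conclusion that $\phi$ attains $m$ somewhere on $E^f$ to the stronger statement that $\phi \equiv m$ on all of $E^f$. This rests on a careful combination of the concave-function analysis of \cite{CG72} on totally convex sets in nonnegative curvature with the rigidity imposed by the isometric $\langle f\rangle$-action on the symmetric configuration of nearest points. Once the induction is complete, $E^f \subseteq S$ as desired.
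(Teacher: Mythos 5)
Your approach is genuinely different from the paper's, but the inductive step carries a real gap that you yourself flag, and it is not a minor one.

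The paper does not attempt to track $E^f$ through the Cheeger--Gromoll shrinking sequence at all. Instead it fixes $x_0\in E^f$, observes $B\circ f=B$, and invokes the distance-nonincreasing retraction $P:E\to S$ of Yim together with its accompanying homotopy $H$ from $\mathrm{Id}_E$ to $P$; checking that both commute with $f$ (because $B\circ f=B$), one gets $P(E^f)\subseteq E^f$ and $H_s(E^f)\subseteq E^f$, so $E^f\cap S$ is a retract of $E^f$. Since $E^f$ is a compact manifold without boundary, a retraction onto a proper closed subset is impossible (top $\mathbb{Z}/2$-homology), so $E^f\cap S=E^f$. This bypasses the whole question of whether $E^f$ sits inside each intermediate $C_i$.

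Your route instead needs the inductive claim $E^f\subseteq C_{i+1}$, i.e.\ $\phi\equiv m$ on $E^f$, given $E^f\subseteq C_i$. Two separate things are missing. First, the existence of a nonzero $f_*$-invariant \emph{strict} ascent direction $v\in T_xE^f$ is not established: the open convex cone $K$ of strict ascent directions is $f_*$-invariant, and a Haar average of $v\in K$ over the closure of $\langle f_*\rangle$ in $O(T_xE)$ is fixed and lies in $\overline K$ and is nonzero (one can average a separating functional to see this), but it may land on $\partial K$, giving only a non-strict ascent direction; your appeal to a Kakutani-type argument does not resolve this. Second, and more seriously, even if at every $x\in E^f$ with $\phi(x)<m$ you had a strict ascent direction in $T_xE^f$, iterating from a single point only forces $\sup_{E^f}\phi=m$; it does not force $\phi\equiv m$ on $E^f$, which is what $E^f\subseteq C_{i+1}$ requires. (In fact, since $E^f$ is totally geodesic and $\phi$ is concave, $\phi|_{E^f}$ is a concave function on a compact boundaryless manifold and hence is \emph{locally constant}; this is consistent with the possibility $\phi\equiv c<m$ on a component of $E^f$, and the non-strict ascent direction you can actually produce does not contradict it.) So the inductive step as outlined does not close, and I do not see how to complete it without essentially importing the retraction argument. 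I would recommend replacing the induction with the $f$-equivariance of Yim's $P$ and $H$ and the ensuing retraction obstruction, which is both shorter and avoids the problem entirely.
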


\begin{proof}
Fix $x_0 \in E^f$ and define $B(x)=sup_{\gamma} B_\gamma(x), x \in E$, where $\gamma: [0, +\infty) \to E$ is a ray with $\gamma(0)=x_0$ and $B_\gamma$ is the  Busemann function given by
$$B_\gamma(x)=\displaystyle{\lim_{t\to +\infty}}(t- d (x, \gamma(t)).$$
Since $f$ is an isometry with $f(x_0)=x_0$, we see $B_{f\circ \gamma}(f(x))=B_\gamma(x)$. Then we get  $B \circ f=B$ which will be crucial for us.
By the arguments in \cite{CG72}, we get a soul $S$ of $E$ which satisfies $f(S)=S$ (e.g, Corollary 6.3 in \cite{CG72}).
Moreover, there is a distance nonincreasing retraction $P: E \to S$ (e.g, \cite{Yim}). By checking the construction of $P$ carefully and using $B \circ f=B$,
we also see
$$P \circ f=f \circ P.$$
For any $x \in E^f$, we have $f(x)=x$ and
$$d(P(x), f \circ P ( x))=d(P(x), P \circ f(x)) \leq d(x, f(x))=0.$$
Then $P(x)=f \circ P (x)$ and hence
\begin{equation} \label{1}
P(E^f) \subseteq E^f.
\end{equation}
It follows that $P(E^f) \subseteq E^f \cap S$. On the other hand, since $P_{|S}=Id_{|S}$, we see $E^f \cap S= P(E^f \cap S) \subseteq P(E^f).$
Then
$$P(E^f)=E^f \cap S.$$
Let $H:E \times [0,1]\to E$ be the homotopy between $Id_{|E}$ and $P$ constructed in \cite{Yim}. Then for each $s\in [0,1]$, $H_s:=H(,s): E \to E$ is a distance nonincreasing map.
By checking the construction of $H$ carefully and using $B \circ f=B$, we see
$$H_s \circ f= f \circ H_s.$$
By a similar argument in the proof of \ref{1}, we see $H_s (E^f)\subseteq E^f$  and hence $H_{|E^f \times [0,1]}: E^f \times [0,1] \to E^f$ is a homotopy between $Id_{|E^f}$ and $P_{|E^f}$.
It follows that $E^f \cap S$ is a retraction of $E^f$. Since $E^f$ is a compact manifold without boundary,
we see $E^f \cap S =E^f$ and hence $E^f \subseteq S$.

\end{proof}

Now Theorem \ref{main} follows from Theorem \ref{fix} easily. Without loss of generality, we can assume that $X_\mathbb{C}$ is connected.
Then by Theorem \ref{fix}, $X_\mathbb{R}$ is contained in a soul $S$ of $X_\mathbb{C}$.
It's known that a smooth real affine variety $X_\mathbb{C}$ has the homotopy type of a $CW$ complex of dimension $\leq dim (X_\mathbb{R})$ (Theorem 7.2 in \cite{Milnor}).
It follows that $X_\mathbb{R}$ has the same dimension as $S$ and then $X_\mathbb{R}=S$ since $X_\mathbb{R}$ is a closed submanifold of $S$ and $S$ is connected.

\bibliographystyle{amsplain}

\end{document}